\date{}
\begin{document}

\newtheorem{thm}{Theorem}[section]
\newtheorem{lem}[thm]{Lemma}
\newtheorem{prop}[thm]{Proposition}
\newtheorem{cor}[thm]{Corollary}
\newtheorem{con}[thm]{Conjecture}
\newtheorem{cons}[thm]{Construction}
\newtheorem{claim}[thm]{Claim}
\newtheorem{obs}[thm]{Observation}
\newtheorem{rmk}[thm]{Remark}
\newtheorem{defn}[thm]{Definition}
\newtheorem{example}[thm]{Example}
\newcommand{\di}{\displaystyle}
\def\dfc{\mathrm{def}}
\def\cF{{\cal F}}
\def\cH{{\cal H}}
\def\cT{{\cal T}}
\def\cM{{\cal M}}
\def\cA{{\cal A}}
\def\cB{{\cal B}}
\def\cG{{\cal G}}
\def\ap{\alpha'}
\def\Frk{F_k^{2r+1}}
\def\nul{\varnothing} 
\def\st{\colon\,}   
\def\MAP#1#2#3{#1\colon\,#2\to#3}
\def\VEC#1#2#3{#1_{#2},\ldots,#1_{#3}}
\def\VECOP#1#2#3#4{#1_{#2}#4\cdots #4 #1_{#3}}
\def\SE#1#2#3{\sum_{#1=#2}^{#3}}  \def\SGE#1#2{\sum_{#1\ge#2}}
\def\PE#1#2#3{\prod_{#1=#2}^{#3}} \def\PGE#1#2{\prod_{#1\ge#2}}
\def\UE#1#2#3{\bigcup_{#1=#2}^{#3}}
\def\FR#1#2{\frac{#1}{#2}}
\def\FL#1{\left\lfloor{#1}\right\rfloor} 
\def\CL#1{\left\lceil{#1}\right\rceil}

\title{Edge-connectivity in regular multigraphs from eigenvalues}
\author{Suil O\thanks{Department of Mathematics and Statistics, Georgia State University, Atlanta, GA, 30303, suilo@gsu.edu.}
}

\maketitle

\begin{abstract}
Let $G$ be a $d$-regular multigraph, and let $\lambda_2(G)$ be the second largest eigenvalue of $G$.
In this paper, we prove that if $\lambda_2(G) < \frac{d-1+\sqrt{9d^2-10d+17}}4$,
then $G$ is 2-edge-connected. 
Furthermore, for $t\ge2$ we show that $G$ is $(t+1)$-edge-connected when $\lambda_2(G)<d-t$, and 
in fact when $\lambda_2(G)<d-t+1$ if $t$ is odd.
\end{abstract}

\section {Introduction}
A {\it simple} graph is a graph without loops or multiple edges. In this paper, a {\it multigraph} is a graph
that can have multiple edges but does not contain loops. A simple graph or multigraph $G$
is {\it $k$-connected} if $G$ has more than $k$ vertices and every subgraph obtained by deleting fewer than $k$ vertices is connected;
the {\it connectivity} of $G$, written $\kappa(G)$, is the maximum $k$ such that $G$ is $k$-connected.
The {\it adjacency matrix} $A(G)$ of $G$ is the $n$-by-$n$ matrix in which the entry $a_{i,j}$ is the number of edges in $G$ with endpoints $\{v_i, v_j\}$,
where $V(G)=\{v_1, ... , v_n\}$. The {\it eigenvalues} of $G$ are the eigenvalues of its adjacency matrix $A(G)$.
Let $\lambda_1(G), ... , \lambda_n(G)$ be its eigenvalues indexed in nonincreasing order.
The {\it Laplacian matrix} of $G$ is $D(G)-A(G)$, where $D(G)$ is the diagonal matrix of degrees.
Let $\mu_1(G), ... , \mu_n(G)$ be its eigenvalues indexed in nondecreasing order.
Note that if $G$ is a $d$-regular graph, then $\lambda_i(G)=d-\mu_i(G)$ for $1 \le i \le n$.

A lot of research in graph theory over the last 40 years was stimulated by a classical result of Fiedler~\cite{F}, stating that 
\begin{equation}\label{fieeq}
\kappa(G) \ge \mu_2(G)\end{equation}
for a non-complete simple graph $G$. In 2002, Kirkland, Molitierno, 
Neumann, and Shader~\cite{KMNS} characterized when equality in the inequality~(\ref{fieeq}) holds.


A simple graph or multigraph $G$ is {\it $t$-edge-connected} if every subgraph obtained by deleting fewer than $t$ edges is connected;
the {\it edge-connectivity} of $G$, written $\kappa'(G)$, is the maximum $t$ such that $G$ is $t$-edge-connected.
Note that $\kappa(G) \le \kappa'(G)$.
Chandran~\cite{C1} proved that if $G$ is an $n$-vertex $d$-regular simple graph with \begin{equation}\label{chan}
             \lambda_2(G) < d-1 -\frac{d}{n-d},
            \end{equation} then $\kappa'(G)=d$ and if $|[S,\overline{S}]|=d$, where for vertex sets $S$ and $T$, $[S,T]$ is the set of edges from $S$ to $T$,
then $S$ equals a single vertex or $V(G-v)$ for a vertex $v \in V(G)$. 
Krivelevich and Sudakov~\cite{KS} slightly improved Chandran's result and showed that
if $G$ is a $d$-regular simple graph with $\lambda_2(G) \le d-2$, then $\kappa'(G)=d$. In 2010, Cioab{\v a}~\cite{C2} proved that if $$\lambda_2(G) < d - \frac{2t}{d+1},$$
then $\kappa'(G) \ge t+1$. When $t$  equals $1$ or $2$, he proved stronger results .

\begin{thm}\label{sebimain1}{\rm \cite{C2}}
Let $d$ be an odd integer at least 3 and let $\pi(d)$ be the largest root
of $x^3-(d-3)x^2-(3d-2)x-2=0$. If $G$ is a $d$-regular simple graph such that
$\lambda_2(G)<\pi(d)$, then $\kappa'(G) \ge 2$.
\end{thm}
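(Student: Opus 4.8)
The plan is to prove the contrapositive: if $G$ fails to be $2$-edge-connected then $\lambda_2(G)\ge\pi(d)$, where I write $P(x)=x^{3}-(d-3)x^{2}-(3d-2)x-2$. Two preliminary observations come first. (i) $\pi(d)<d$: since $P(d)=2(d-1)>0$, $P'(d)=(d+1)(d+2)>0$, and a short check shows the rightmost critical point of $P$ is below $d$, we get $P>0$ on $[d,\infty)$, so no root reaches $d$. (ii) Because a disconnected $d$-regular graph has $d$ as an eigenvalue of multiplicity at least $2$, the hypothesis $\lambda_2(G)<\pi(d)<d$ already forces $G$ connected. Hence a failure of $2$-edge-connectivity comes from a bridge $e=uv$, and I would fix one for which the side $S\ni u$ (with $T:=V(G)\setminus S\ni v$, $[S,T]=\{e\}$) is inclusion-minimal. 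Counting gives $2e(G[S])=d|S|-1$, so $d|S|$ is odd; as $d$ is odd this makes $|S|$ odd, and likewise $|T|$ odd. Every vertex of $S$ other than $u$ keeps all $d$ neighbours in $S$, so $|S|\ge d+1$, and parity upgrades this to $|S|\ge d+2$; symmetrically $|T|\ge d+2$. Assume $|S|\le|T|$.

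Now split on whether $|S|=|T|=d+2$. In that extremal case the graph is rigid: the complement of $G[S]$ on its $d+2$ vertices has $u$ of degree $2$ and every other vertex of degree $1$, hence is $P_3\sqcup\tfrac{d-1}{2}K_2$ (unique up to isomorphism), so $G[S]$ and $G[T]$ are both the fixed graph $G_1:=\overline{P_3\sqcup\tfrac{d-1}{2}K_2}$ and the bridge joins the unique degree-$(d-1)$ vertex of each copy; call the resulting graph $H_d$. I would exhibit the equitable partition of $H_d$ into the six classes $\{u\}$, $N_S(u)$, $S\setminus N[u]$ and their three mirrors in $T$, write down the $6\times6$ quotient matrix, and use the copy-swapping involution to block-diagonalize it into a symmetric $3\times3$ block with eigenvalues $d,1,-2$ and an antisymmetric $3\times3$ block; a short determinant computation shows the latter has characteristic polynomial exactly $P$. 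Checking that every eigenvector of $H_d$ supported inside a single class has nonpositive eigenvalue then yields $\lambda_2(H_d)=\pi(d)$, contradicting $\lambda_2(G)<\pi(d)$.

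In the remaining case $|T|\ge d+4$, I would apply Haemers interlacing to the (non-equitable) partition $(\{u\},\,S\setminus\{u\},\,T\setminus\{v\},\,\{v\})$, whose quotient matrix depends only on $|S|,|T|,d$: with $\alpha=\tfrac{d-1}{|S|-1}$ and $\beta=\tfrac{d-1}{|T|-1}$ it is
\[
B=\begin{pmatrix}0&d-1&0&1\\[2pt]\alpha&d-\alpha&0&0\\[2pt]0&0&d-\beta&\beta\\[2pt]1&0&d-1&0\end{pmatrix},
\qquad 0<\alpha\le\tfrac{d-1}{d+1},\ \ 0<\beta\le\tfrac{d-1}{d+3},
\]
so $\lambda_2(G)\ge\lambda_2(B)$. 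The matrix $B$ is nonnegative, irreducible, aperiodic, with constant row sums $d$, hence $d$ is its simple Perron eigenvalue and its three other (real) eigenvalues are the roots of the cubic $q_{\alpha,\beta}(x):=\det(xI-B)/(x-d)$; thus $\lambda_2(B)$ is the largest root of $q_{\alpha,\beta}$. It remains to show this root is $\ge\pi(d)$ for all admissible $\alpha,\beta$, and this is the step I expect to be the main obstacle. The natural route is to differentiate the identity $q_{\alpha,\beta}\big(x(\alpha,\beta)\big)=0$ to see the top root decreases in each of $\alpha,\beta$, so its minimum over the parameter box is at the corner $\alpha=\tfrac{d-1}{d+1}$, $\beta=\tfrac{d-1}{d+3}$ (i.e.\ $|S|=d+2$, $|T|=d+4$), and then to verify $q_{(d-1)/(d+1),\,(d-1)/(d+3)}(\pi(d))\le0$ for every odd $d\ge3$, which, since $\pi(d)$ is a root of $P$, reduces to a resultant/sign computation in $d$. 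Then $\lambda_2(G)\ge\lambda_2(B)\ge\pi(d)$ contradicts the hypothesis and finishes the proof. (One could instead avoid the case split by running the interlacing argument with the finer six-part partition $\{u\},N_S(u),S\setminus N[u]$ and its mirror, at the cost of a four-parameter monotonicity analysis whose extremal configuration is again $H_d$.)
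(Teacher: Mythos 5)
You should first note that the paper does not prove Theorem~\ref{sebimain1} at all: it is quoted from Cioab\u{a}~\cite{C2}, so there is no internal proof to compare against, and your attempt has to stand on its own. Its skeleton is sound and is in the spirit of the quotient-matrix interlacing used elsewhere in this paper: the parity count forcing $|S|,|T|$ odd and $\ge d+2$ is correct; in the extremal case $|S|=|T|=d+2$ the complement-degree argument does force $G\cong H_d$, your six-class partition is equitable, and I checked that the antisymmetric block $\left(\begin{smallmatrix}-1&0&d-1\\ 0&1&d-1\\ 1&2&d-3\end{smallmatrix}\right)$ has characteristic polynomial exactly $x^3-(d-3)x^2-(3d-2)x-2$, so $\pi(d)$ is an eigenvalue of $H_d$ strictly below $d$ and $\lambda_2(H_d)\ge\pi(d)$ already contradicts the hypothesis (you do not even need the full claim $\lambda_2(H_d)=\pi(d)$, so the "eigenvectors supported in a class" check can be dropped).

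The genuine gap is the case $|T|\ge d+4$, and it is exactly the step you flag as "the main obstacle": the monotonicity of the largest root of $q_{\alpha,\beta}$ in $\alpha$ and $\beta$ is asserted but not proved, and the corner verification for every odd $d\ge3$ is not carried out. This is not a routine omission, because the margin is thin: the coarser two-part quotient bound $d-\frac1{d+2}-\frac1{d+4}$ already falls \emph{below} $\pi(d)$, which is precisely why your four-part refinement is needed, so the sign computation is the substance of the theorem and cannot be left as a plan. A concrete way to finish inside your framework: expanding $\det(xI-B)$ along the second row gives $\det(xI-B)=F_\alpha(x)F_\beta(x)-(x-d+\alpha)(x-d+\beta)$ with $F_\gamma(x)=x^2-(d-\gamma)x-\gamma(d-1)$; since $d$ is a simple Perron root of $B$ and $\pi(d)<d$, the inequality $\lambda_2(B)\ge\pi(d)$ is equivalent to $\det\bigl(\pi(d)I-B\bigr)\ge0$, and this quantity is affine in each of $\alpha,\beta$, so its minimum over the box $0<\alpha\le\frac{d-1}{d+1}$, $0<\beta\le\frac{d-1}{d+3}$ occurs at a corner. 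That replaces the unproven root-monotonicity by four explicit one-variable sign checks in $d$ (which do succeed: at the worst corner the relevant quantity behaves like $4/d$ for large $d$, and for $d=3$ one gets $\lambda_2(B)\approx2.80>\pi(3)\approx2.78$), but until such checks are actually written down the proof is incomplete.
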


This result is best possible in a sense that there exists a $d$-regular simple graph $H$ with $\lambda_2(H)=\pi(d)$ and with $\kappa'(H)=1$ (See~\cite{C2}).

\begin{thm}\label{sebimain2}{\rm \cite{C2}}
If $G$ is a $d$-regular simple graph such that $\lambda_2(G) < \frac{d-3+\sqrt{(d+3)^2-16}}{2}$,
then $\kappa'(G) \ge 3$.
\end{thm}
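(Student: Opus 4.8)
\medskip\noindent
The plan is to prove the contrapositive: assuming $\kappa'(G)\le 2$, I will show $\lambda_2(G)\ge\frac{d-3+\sqrt{(d+3)^2-16}}{2}$. Since $\kappa'(G)\le\delta(G)=d$ we may take $d\ge 3$. Fix a minimum edge cut $[S,\overline{S}]$ and put $e:=|[S,\overline{S}]|=\kappa'(G)\in\{0,1,2\}$; if $e=0$ then $G$ is disconnected and $\lambda_2(G)=d$, which already exceeds the claimed value. A crude size bound comes first: since $G$ is simple and $d$-regular, $e=d|S|-2|E(G[S])|\ge d|S|-|S|(|S|-1)=|S|(d+1-|S|)$, and $x\mapsto x(d+1-x)$ is at least $d$ throughout $[1,d]$; as $e\le 2<d$ this forces $|S|\ge d+1$, and by symmetry $|\overline{S}|\ge d+1$.

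Throughout I would use eigenvalue interlacing for quotient matrices: if $B$ is the quotient matrix of a partition of $V(G)$ into $m$ parts ($B_{ij}$ being the average number of neighbours in part $j$ of a vertex of part $i$), then the eigenvalues of $B$ interlace those of $A(G)$, so $\lambda_2(G)\ge\lambda_2(B)$. For the bridge case $e=1$, the quotient of $\{S,\overline{S}\}$ has eigenvalues $d$ and $d-\frac1{|S|}-\frac1{|\overline{S}|}\ge d-\frac2{d+1}$, so $\lambda_2(G)\ge d-\frac2{d+1}$; since $d-\frac2{d+1}>\frac{d-3+\sqrt{(d+3)^2-16}}{2}$ for all $d\ge 2$ (a one-line verification after clearing radicals), this case is done. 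So $e=2$.

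Let $T\subseteq S$ and $T'\subseteq\overline{S}$ be the endpoint sets of the two cut edges, so $1\le|T|,|T'|\le 2$; as $G$ is simple, $|T|=1$ forces $|T'|=2$ (the vertex of $T$ has two distinct neighbours in $\overline{S}$), and symmetrically. If $|T|=1$, say $T=\{u\}$, regularity forces $u$ to have $d-2$ neighbours in $S$, so the quotient $B$ of $\{S\setminus\{u\},\{u\},\overline{S}\}$ has entries depending only on $|S|,|\overline{S}|,d$; minimising $\lambda_2(B)$ over the feasible ranges ($|S|,|\overline{S}|\ge d+1$ together with the simple-graph bound on $|E(G[S\setminus\{u\}])|$) still keeps it $\ge\frac{d-3+\sqrt{(d+3)^2-16}}{2}$, so this subcase is not extremal. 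The decisive case is $|T|=|T'|=2$, where the cut edges form a matching $u_1v_1,u_2v_2$ with $u_i\in S$, $v_i\in\overline{S}$; here I would take the four-part partition $\{A,\{u_1,u_2\},\{v_1,v_2\},D\}$ with $A=S\setminus\{u_1,u_2\}$, $D=\overline{S}\setminus\{v_1,v_2\}$, so $|A|,|D|\ge d-1$. Letting $\varepsilon_1,\varepsilon_2\in\{0,1\}$ count the edges inside $\{u_1,u_2\}$ and inside $\{v_1,v_2\}$, regularity makes $B$ tridiagonal with diagonal $(d-c_1,\varepsilon_1,\varepsilon_2,d-c_2)$, where $c_1=\frac{2(d-1-\varepsilon_1)}{|A|}$, $c_2=\frac{2(d-1-\varepsilon_2)}{|D|}$, the off-diagonal entries being fixed by the $c_i$ and $\varepsilon_i$. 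Its spectral radius is $d$, and the heart of the proof is to show that over all admissible $(\varepsilon_1,\varepsilon_2,|A|,|D|)$ the second-largest eigenvalue of $B$ is minimised exactly at $\varepsilon_1=\varepsilon_2=0$ and $|A|=|D|=d-1$ — that is, when $G[S]$ and $G[\overline{S}]$ are copies of $K_{d+1}$ minus an edge joined by a matching on the two deficient vertices. In that case $c_1=c_2=2$, and the reversal symmetry of $B$ splits its spectrum into the eigenvalue $d$, the value $-1$, and the two roots of $x^2-(d-3)x-(3d-4)=0$, the larger of which is exactly $\frac{d-3+\sqrt{(d+3)^2-16}}{2}$. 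Interlacing then gives $\lambda_2(G)\ge\lambda_2(B)\ge\frac{d-3+\sqrt{(d+3)^2-16}}{2}$, as needed.

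I expect the main obstacle to be precisely that last minimisation: showing that enlarging $|A|$ or $|D|$, or raising an $\varepsilon_i$ from $0$ to $1$, can only increase $\lambda_2(B)$, so that the ``two copies of $K_{d+1}$ minus an edge'' configuration is the unique minimiser. The cleanest route is probably to divide the known eigenvalue $d$ out of $\det(xI-B)$ and control the sign of the remaining cubic at $x=\frac{d-3+\sqrt{(d+3)^2-16}}{2}$ over the whole feasible region, with equality only at the symmetric configuration; the asymmetric subcases ($\varepsilon_1\neq\varepsilon_2$ or $|A|\neq|D|$), where the convenient reversal symmetry disappears, are the fiddliest part. By contrast, the size bound, the bridge case, and the $|T|=1$ subcase are routine once the relevant quotient matrices are written down.
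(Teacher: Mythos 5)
First, note that the paper gives no proof of this statement: it is quoted from Cioab\u{a}~\cite{C2}, so there is no in-paper argument to compare against; the closest analogue is the proof of Theorem~\ref{main1}, which uses exactly the quotient-matrix/interlacing machinery you propose but actually carries out the case-by-case eigenvalue estimates. Your skeleton is sound as far as it goes: the size bound $|S|,|\overline S|\ge d+1$, the $e\in\{0,1\}$ cases, the identification of the extremal configuration (two copies of $K_{d+1}$ minus an edge joined by a matching), and the computation that the symmetric $4$-part quotient with $c_1=c_2=2$, $\varepsilon_1=\varepsilon_2=0$ has spectrum $d$, $-1$, and the roots of $x^2-(d-3)x-(3d-4)$, whose larger root is exactly $\frac{d-3+\sqrt{(d+3)^2-16}}{2}$, are all correct.

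The genuine gap is the step you yourself flag as ``the heart of the proof'': you never show that $\lambda_2(B)\ge\frac{d-3+\sqrt{(d+3)^2-16}}{2}$ for \emph{every} feasible choice of $(\varepsilon_1,\varepsilon_2,|A|,|D|)$ in the $|T|=|T'|=2$ case, nor do you prove the analogous claim for the $3$-part quotient in the $|T|=1$ subcase; both are asserted (``still keeps it $\ge\dots$'', ``is minimised exactly at\dots'') with no argument. This is not mere bookkeeping deferred to the reader: interlacing only gives $\lambda_2(G)\ge\lambda_2(B)$, so even though the theorem is true it is not automatic that your particular quotient stays above the target for all admissible parameters, and the verification must exploit feasibility constraints coupling the parameters that you have not recorded (for instance, a simple edge count in $G[S]$ shows $|A|=d-1$ forces $\varepsilon_1=0$, so the configuration ``$|A|=d-1$, $\varepsilon_1=1$'' with $c_1<2$ never occurs; without such constraints the claimed monotonicity/minimisation is not even well posed). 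So what remains missing is precisely the quantitative core of the theorem --- the analogue of Cases 1--4 in the paper's proof of Theorem~\ref{main1}, or, in Cioab\u{a}'s original treatment, separate lower bounds on the spectral radii of the two induced subgraphs $G[S]$ and $G[\overline S]$ combined via the fact that two disjoint induced subgraphs force $\lambda_2(G)\ge\min\{\lambda_1(G[S]),\lambda_1(G[\overline S])\}$, which avoids the four-parameter optimisation altogether. Until that minimisation (or a substitute for it) is carried out, the proposal is a plausible plan rather than a proof.
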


This result is also best possible in a sense that there exists a $d$-regular simple graph $K$ with $\lambda_2(K)= \frac{d-3+\sqrt{(d+3)^2-16}}{2}$ and with $\kappa'(K)=2$ (See~\cite{C2}). 
To guarantee higher edge-connectivity, the author conjectured in his thesis~[8].

\begin{con}  {\rm ~\cite{O}} \label{conj}
Let $\rho(d,t)=\begin{cases} 
\frac{d-4 + \sqrt{(d+4)^2-8t}}{2},  \text{ when } t \text{ is odd } \\
\frac{d-3 + \sqrt{(d+3)^2-8t}}{2}, \text{ when } t \text{ is even }.\end{cases}$ \\
For $t \ge 3$, if $G$ is a $d$-regular simple graph such that $\lambda_2(G) < \rho(d,t)$,
then $\kappa'(G) \ge t+1$.
\end{con}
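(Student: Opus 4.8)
The plan is to push Cioab\v{a}'s method behind Theorems~\ref{sebimain1} and~\ref{sebimain2} up to all $t\ge3$. Assume $G$ has enough vertices (small sporadic exceptions being excluded) and, for contradiction, that $\kappa'(G)=r\le t$; fix an edge cut $[S,\overline S]$ of size $r$ with $S$ an inclusion-minimal fragment, write $s=|S|$, and assume $s\le n-s$. Summing degrees over $S$ gives $e(S)=(ds-r)/2$, so $ds\equiv r\pmod 2$; when $r$ is odd this forces $d$ odd. First I would bound $s$ from below: since $G$ is simple, $e(S)\le\binom s2$, hence $r\ge s(d-s+1)$, and since $s(d-s+1)\ge d$ for every integer $s\in[1,d]$, the hypothesis $r\le t<d$ rules out $s\le d$. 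Thus $s\ge d+1$ and likewise $n-s\ge d+1$, and the parity constraint upgrades this to $s\ge d+2$ in the odd case. (The range $t\ge d$ needs a separate, easier treatment, since there $\kappa'(G)\le\delta(G)\le t$ is automatic.)

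Next I would localize the cut. Let $T\subseteq S$ and $\overline T\subseteq\overline S$ be the sets of endpoints of the cut edges. When $s=d+1$, every full-degree vertex $v$ of $G[S]$ is adjacent to all of $S\setminus\{v\}$, so $G[S]=K_{d+1}-F$ for a graph $F$ supported on $T$ with $\deg_F(v)$ equal to the number of cut edges at $v$; in particular $r=2e(F)$ is even (consistently with the parity above) and $T$ is complete to $S\setminus T$. The analogous statement on the $\overline S$ side, and the obvious modifications when $s=d+2$ (where $G[S]=K_{d+2}-F$ with no isolated vertex in $F$), hold as well. Now take the partition $\mathcal P=\{T,\,S\setminus T,\,\overline T,\,\overline S\setminus\overline T\}$ of $V(G)$ and its normalized quotient matrix $N=D^{-1/2}MD^{-1/2}$, where $M_{ij}$ counts edges between the parts ($M_{ii}=2e$ of the induced subgraph) and $D$ is the diagonal of part sizes; for $s=d+1$ this matrix depends only on the five numbers $(s,n-s,|T|,|\overline T|,r)$. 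Since the four part-indicators span a $4$-dimensional subspace, the Courant--Fischer characterization of $\lambda_2$ yields $\lambda_2(G)\ge\lambda_2(N)$, and it remains to prove $\lambda_2(N)\ge\rho(d,r)$; this suffices because $\rho(d,\cdot)$ is non-increasing on the relevant range, so $\rho(d,r)\ge\rho(d,t)$.

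For the ``wide'' regime --- $s\ge d+2$ with $n$ not too close to $2s$ --- the matrix $N$ is not needed at all: the elementary Rayleigh bound $\lambda_2(G)\ge d-\tfrac{rn}{s(n-s)}$ already exceeds $\rho(d,t)$, because $\tfrac{rn}{s(n-s)}\to\tfrac rs\le\tfrac t{d+2}<d-\rho(d,t)$ as $n\to\infty$. So the real work lies in the ``narrow'' regime, $s\in\{d+1,d+2\}$ with both parts close to size $d+1$, where $\overline S$ does not decouple. The candidate extremal configuration is the natural generalization of the examples of~\cite{C2}: $G[S]\cong G[\overline S]\cong K_{d+1}$ minus a perfect matching on $r$ vertices (respectively $K_{d+2}$ minus $C_r\sqcup(\text{a matching})$ in the odd case), joined by a perfect matching across the distinguished vertices. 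For this configuration $\mathcal P$ is equitable, and folding across the $S\leftrightarrow\overline S$ symmetry shows that the antisymmetric eigenvalues of $N$ are the roots of $x^2+(3-d)x-(3d-2r)=0$ (respectively $x^2+(4-d)x-(4d-2r)=0$), whose larger root is exactly the relevant branch of $\rho(d,r)$; hence $\lambda_2(N)=\rho(d,r)$ there.

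The crux --- and the reason this has remained a conjecture --- is to show that this symmetric configuration \emph{minimizes} $\lambda_2(N)$ over all admissible five-tuples and over all ways of distributing the cut edges and the missing edges of $G[S],G[\overline S]$; that is, that clustering the deficiency into stars, cliques, or a few heavy vertices can only raise $\lambda_2(N)$. I expect this to need either a genuine convexity statement for the appropriate root of the characteristic quartic of $N$, or a structural dichotomy on $F$ pruning the parameter space down to a bounded check --- precisely the finite case analysis that makes $t\in\{1,2\}$ tractable in Theorems~\ref{sebimain1} and~\ref{sebimain2}, but which I do not see how to carry out uniformly in $t$. A secondary difficulty is that the $4$-part quotient need not be tight for non-symmetric configurations, so some of them may require a different auxiliary partition, or a direct eigenvector estimate, rather than the clean bound $\lambda_2(G)\ge\lambda_2(N)$.
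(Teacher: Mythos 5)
The statement you were asked about is not a theorem of this paper: it is Conjecture~\ref{conj}, quoted from the author's thesis, and the paper contains no proof of it. The only thing the paper records is a partial result from~\cite{O}: if $\kappa'(G)\le t$ and there is a minimum edge cut $[S,\overline S]$ with \emph{both} $|S|$ and $|\overline S|$ at least $d+4$ ($d$ odd) or $d+3$ ($d$ even), then $\lambda_2(G)\ge\rho(d,t)$. Your proposal, in its ``wide'' regime, essentially re-derives this known part (and note that your limiting argument ``$\tfrac{rn}{s(n-s)}\to\tfrac rs$ as $n\to\infty$'' is unnecessary and imprecise as stated: when both sides have size at least $d+3$ one can check directly that $d-\tfrac ra-\tfrac rb\ge d-\tfrac{2t}{d+3}\ge\tfrac{d-3+\sqrt{(d+3)^2-8t}}{2}$ by squaring, so the two-part quotient already closes that case without any asymptotics).

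The genuine gap is exactly the one you name yourself: in the narrow regime, where one side of the cut has only $d+1$, $d+2$ (or $d+3$) vertices, you would need to show that the symmetric ``two near-cliques joined by the cut'' configuration minimizes $\lambda_2$ of the four-part quotient over all admissible distributions of the cut edges and of the missing edges inside $G[S]$ and $G[\overline S]$, and you offer no argument for this minimization (nor for the secondary issue you flag, that the four-part quotient bound may not be tight for asymmetric configurations). Since that step is precisely what separates the proved cases $t\in\{1,2\}$ of Cioab{\v a} from general $t$, your proposal does not constitute a proof of the conjecture; it is a programme whose missing step coincides with the reason the statement is still labelled a conjecture in the paper. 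To make progress beyond what is already in~\cite{O}, you would have to supply either the convexity/extremality argument for the relevant root of the characteristic polynomial of the quotient matrix in the narrow regime, or a different auxiliary partition or direct eigenvector construction that handles the small-side cases uniformly in $t$.
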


In fact, the author~\cite{O} proved that for a $d$-regular simple graph $G$ with $\kappa'(G) \le t$,
if there exists a vertex subset $S \subseteq V(G)$ with $|[S,\overline{S}]|=\kappa'(G)$ and both $|S|$ and $|\overline{S}|$
are at least $d+4$ when $d$ is odd and at least $d+3$ when $d$ is even, then $\lambda_2(G) \ge \rho(d,t)$.
This gives a partial positive answer to Conjecture 1.3. For each positive integer $t \ge 2$, 
the author constructed a $t$-connected $d$-regular simple graph $G$ with $\lambda_2(G)=\rho(d,t)$,
so the bound on the second largest eigenvalue is best possible if Conjecture~\ref{conj} is true.

In this paper, we extend some of these results to multigraphs. The main results of this paper are Theorem~\ref{main1} and Theorem~\ref{main2}.
In fact, we obtain a best possible condition on the second largest eigenvalue $\lambda_2(G)$ for a $d$-regular multigraph $G$ 
to guarantee that $\kappa'(G) \ge t+1$ for any positive integer $t$. We prove separately when $t=1$ in Section 3 and when $t \ge 2$ in Section 4.

\begin{thm}\label{main1}
If $G$ is a connected $d$-regular multigraph with $\lambda_2(G) < \frac{d-1+\sqrt{9d^2-10d+17}}4$, then $\kappa'(G) \ge 2$.
\end{thm}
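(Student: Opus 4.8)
The plan is to argue by contradiction, using eigenvalue interlacing on a four‑part quotient matrix whose shape is dictated by the (conjectured) extremal example: two copies of a three‑vertex ``near‑$d$‑regular'' multigraph joined by a bridge. So suppose $\kappa'(G)\le 1$. Since $G$ is connected this forces $\kappa'(G)=1$, so $G$ has a bridge $e=u_1u_2$; let $V_1\ni u_1$ and $V_2\ni u_2$ be the vertex sets of the two components of $G-e$, and set $G_i=G[V_i]$. Then each $G_i$ is connected, $u_i$ has degree $d-1$ in $G_i$, and every other vertex of $V_i$ has degree $d$ in $G_i$. First I would record the elementary fact that $|V_i|\ge 3$: if $|V_i|\le 2$, the unique possible neighbor $w$ of $u_i$ in $G_i$ would satisfy $\deg_{G_i}(w)=\deg_{G_i}(u_i)=d-1\ne d$ (here we use $d\ge 2$; the case $d=1$, i.e.\ $G=K_2$, is degenerate). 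Hence $m_i:=|V_i|-1\ge 2$ for $i=1,2$.

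Next I would apply interlacing to the partition $\Pi=(\{u_1\},\,V_1\setminus\{u_1\},\,\{u_2\},\,V_2\setminus\{u_2\})$. Since the only edge of $G$ leaving $V_1$ is $e$, which is incident to $u_1$, a short degree count shows that the quotient matrix of $\Pi$ is
\[
B \;=\;
\begin{pmatrix}
0 & d-1 & 1 & 0\\
\frac{d-1}{m_1} & d-\frac{d-1}{m_1} & 0 & 0\\
1 & 0 & 0 & d-1\\
0 & 0 & \frac{d-1}{m_2} & d-\frac{d-1}{m_2}
\end{pmatrix},
\]
a matrix depending only on $d,m_1,m_2$, all of whose row sums equal $d$. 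Because $G$ is connected and $d$-regular, $d=\lambda_1(G)$ is a simple eigenvalue, and the standard interlacing inequality for quotient matrices gives $\lambda_2(G)\ge\lambda_2(B)$. Thus the theorem reduces to the purely numerical claim that
\[
\lambda_2(B)\;\ge\;\frac{d-1+\sqrt{9d^2-10d+17}}{4}\qquad\text{for all integers }m_1,m_2\ge 2 .
\]

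To prove this I would first settle the case $m_1=m_2=2$ and then show it is the worst one. When $m_1=m_2=2$ the involution swapping the two sides commutes with $B$; on the symmetric subspace $B$ has eigenvalues $d$ and $\frac{3-d}{2}$, while on the antisymmetric subspace it acts as $\bigl(\begin{smallmatrix}-1 & d-1\\ (d-1)/2 & (d+1)/2\end{smallmatrix}\bigr)$, whose characteristic polynomial is $2x^2-(d-1)x-(d^2-d+2)$ with larger root exactly $\frac{d-1+\sqrt{9d^2-10d+17}}{4}$; since for $d\ge 2$ this value lies strictly between $\frac{3-d}{2}$ and $d$, it equals $\lambda_2(B)$ in this case. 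For the monotonicity, substitute $s_i=1/\sqrt{m_i}\in(0,1/\sqrt2\,]$, replace $B$ by the similar symmetric matrix $\widetilde B(s_1,s_2)$, and differentiate $\lambda=\lambda_2(\widetilde B)$ along $s_1$: if $w$ is a unit eigenvector then $\partial\lambda/\partial s_1=2(d-1)w_2(w_1-s_1w_2)$, and the second coordinate of the eigenvector equation rearranges to $w_1-s_1w_2=\frac{\lambda-d}{(d-1)s_1}\,w_2$, so that $\partial\lambda/\partial s_1=\frac{2(\lambda-d)}{s_1}\,w_2^{\,2}\le 0$ because $\lambda_2<\lambda_1=d$ (and one checks $w$ has no zero coordinate). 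Hence $\lambda_2(\widetilde B)$ is non-increasing in each $s_i$, so on $s_1,s_2\le 1/\sqrt2$ its value is at least its value at $s_1=s_2=1/\sqrt2$, namely $\lambda_2(B(2,2))=\frac{d-1+\sqrt{9d^2-10d+17}}{4}$. This contradicts the hypothesis $\lambda_2(G)<\frac{d-1+\sqrt{9d^2-10d+17}}{4}$, so $\kappa'(G)\ge 2$.

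I expect the monotonicity step to be the main obstacle. It hinges on the simplicity of the Perron eigenvalue and on the easily checked inequality that the target value exceeds $d-2$, which is precisely what fixes the sign of $\partial\lambda/\partial s_i$; one must also dismiss the measure-zero set of parameters where $\lambda_2(\widetilde B)$ fails to be differentiable, which continuity handles. If the perturbation argument turns out to be awkward to write cleanly, the fallback is to factor $\det(xI-B)=(x-d)\,p(x)$ and verify directly that $p$ is non-positive at the target value while the target dominates the other roots of $p$; this is elementary but becomes a cumbersome rational inequality in $d,m_1,m_2$, so I would prefer the eigenvalue-derivative route.
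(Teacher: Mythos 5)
Your proposal is correct in substance, but it takes a genuinely different route from the paper. The paper partitions $V(G)$ into the two sides $S,\overline S$ of the cut (eigenvalue $d-\tfrac1a-\tfrac1b$), which suffices when $a\ge5$ or $a=3,b\ge11$, and then handles the remaining small cases by a three-part partition $\bigl(V(B_{d,1}),\{x\},\text{rest}\bigr)$ with explicit polynomial inequalities for $b\in\{5,7,9\}$, finally identifying $G=H_{d,1}$ when $a=b=3$; it also leans on the parity facts ($d,a,b$ odd) and on the uniqueness of the $3$-vertex gadget $B_{d,1}$. You instead use a single four-part quotient matrix $B(m_1,m_2)$ refined at both endpoints of the bridge (I checked its entries, the factor $2x^2-(d-1)x-(d^2-d+2)$ at $m_1=m_2=2$, and the Hellmann--Feynman identity $\partial\lambda/\partial s_1=\tfrac{2(\lambda-d)}{s_1}w_2^2$; all are correct, and since $d$ is the simple Perron value of the irreducible nonnegative matrix $B$, every other eigenvalue branch is $<d$, so $\lambda_2$ is non-increasing in each $s_i=1/\sqrt{m_i}$ and is minimized at $m_1=m_2=2$, where it equals the threshold). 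Your approach buys uniformity: one extremal computation plus a monotonicity lemma replaces the paper's case analysis, and it needs neither the parity observations nor Observation~2.1. What it costs is the analytic care at eigenvalue crossings: ``continuity handles it'' should be firmed up, e.g.\ by invoking Rellich's theorem to choose analytic eigenvalue/eigenvector branches (the formula then applies to each non-Perron branch, and $\lambda_2$ is their maximum), or by noting the exceptional set is the zero set of the discriminant of the cubic factor of $\det(xI-B)$, hence finite for the relevant one-parameter families, so that a continuous function with nonpositive derivative off a finite set is non-increasing; your fallback of showing $p(x)\le0$ at the target value for the cubic factor $p$ is also a legitimate, purely algebraic substitute. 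With that one point tightened, your argument is a complete proof, and arguably cleaner than the paper's.
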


In Section 2, for any positive integer $d \ge 3$,
 we construct an example of a $d$-regular multigraph $H_{d,1}$ having $\lambda_2(H_{d,1})=\frac{d-1+\sqrt{9d^2-10d+17}}4$
 and  $\kappa'(H_{d,1})=1$. Our construction shows that Theorem~\ref{main1} is best possible.
To guarantee higher edge-connectivity, we need a smaller upper bound on $\lambda_2$ in terms of $d$ and $t$.

\begin{thm}\label{main2}
For $t \ge 2$, if $G$ is a connected $d$-regular multigraph with $\lambda_2(G) < d-t$,
then $\kappa'(G) \ge t+1$. Furthermore, if $t$ is odd and $G$ is a connected $d$-regular multigraph with $\lambda_2(G) < d-t+1$, then $\kappa'(G) \ge t+1$.
\end{thm}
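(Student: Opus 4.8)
I will prove both statements in contrapositive form: assuming $\kappa'(G)\le t$, I produce a lower bound on $\lambda_2(G)$. Since $\kappa'(G)\le d$ for any $d$-regular graph, the conclusion $\kappa'(G)\ge t+1$ is only in question when $d\ge t+1$, and I work in that range. Put $k=\kappa'(G)\le t$ and fix a minimum edge cut $[S,\overline S]$ with $|[S,\overline S]|=k$; write $n_1=|S|$ and $n_2=|\overline S|$. A single vertex spans exactly $d$ cut edges, and $k\le t<d$, so neither side is a single vertex: $n_1,n_2\ge 2$. Counting degrees gives $2e(G[S])=dn_1-k$ and $2e(G[\overline S])=dn_2-k$, so the quotient matrix of $A(G)$ for the partition $\{S,\overline S\}$ (the matrix of average block row sums) is
\[
B=\begin{pmatrix} d-\tfrac{k}{n_1} & \tfrac{k}{n_1}\\ \tfrac{k}{n_2} & d-\tfrac{k}{n_2}\end{pmatrix},
\]
with eigenvalues $d$ and $d-k\bigl(\tfrac1{n_1}+\tfrac1{n_2}\bigr)$. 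Eigenvalue interlacing for arbitrary vertex partitions gives $\lambda_2(G)\ge\lambda_2(B)=d-k\bigl(\tfrac1{n_1}+\tfrac1{n_2}\bigr)$, and since $n_1,n_2\ge2$ forces $\tfrac1{n_1}+\tfrac1{n_2}\le1$ while $k\le t$, we obtain $\lambda_2(G)\ge d-t$. This is exactly the contrapositive of the first assertion.

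For the sharper bound when $t$ is odd, keep the notation above and note that if $k\le t-1$ the same computation already yields $\lambda_2(G)\ge d-k\ge d-t+1$; so assume $k=t$. Now parity enters. If $d$ is even, every edge cut of $G$ has even size because $|[S,\overline S]|=dn_1-2e(G[S])$; this contradicts $k=t$ odd, so $d$ is odd. With $d$ and $k=t$ both odd, the relation $dn_1-2e(G[S])=t$ forces $dn_1$ odd, hence $n_1$ odd; and $n_1=1$ would give $t=d$, against $d\ge t+1$, so $n_1\ge3$, and likewise $n_2\ge3$. Then $\tfrac1{n_1}+\tfrac1{n_2}\le\tfrac23$, so $\lambda_2(G)\ge d-\tfrac{2t}{3}\ge d-t+1$, the last step using $t\ge3$. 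This is the contrapositive of the second assertion.

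The arithmetic here is light; the substance is in the two structural reductions that let the crude $2$-by-$2$ quotient suffice — that a minimum cut of size below $d$ in a $d$-regular multigraph never isolates a vertex, and the parity dichotomy (no odd cuts when $d$ is even; both cut sides of size at least three when $d$ is odd and the cut has odd size). I expect the only real care to be needed around the degenerate regime $d\le t$, where a single-vertex side can reappear and $\tfrac1{n_1}+\tfrac1{n_2}$ can be as large as $2$, so the statement should be read (as it implicitly must be) in the range $d\ge t+1$. A minor point to verify is that interlacing is invoked in the version valid for \emph{non-equitable} partitions, via the quotient matrix's similarity to the symmetric matrix obtained from the scaled orthonormal indicator vectors of the parts.
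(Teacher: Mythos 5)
Your proposal is correct and follows essentially the same route as the paper: interlacing applied to the $2\times2$ quotient matrix of the partition $\{S,\overline S\}$ for a minimum cut, giving $\lambda_2(G)\ge d-k\bigl(\tfrac1{n_1}+\tfrac1{n_2}\bigr)$, with the parity observation forcing both sides to have at least three vertices when $k=t$ is odd. You are in fact slightly more careful than the paper, which asserts $a\ge2$ (and $a\ge3$ in the odd case) without spelling out the parity argument or the implicit restriction to $d\ge t+1$ that your write-up makes explicit.
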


We can rephrase Theorem~\ref{main2} as follows: if $G$ is a connected $d$-regular multigraph with
$\lambda_2(G) < d - 2l$ for $l \ge 1$, then $\kappa'(G) \ge 2l+2$. Our result implies that as the upper bound
on $\lambda_2(G)$ for a $d$-regular graph $G$ decreases by 2, the lower bound on $\kappa'(G)$ increases by 2.
However, decreasing the bound on $\lambda_2(G)$ by 1 does not increase the lower bound on $\kappa'(G)$ by 1;
this is a sense in which our result is sharp.  For example, $\lambda_2(G)<d-2$ implies $\kappa'(G)\ge 4$, and
$\lambda_2(G)<d-4$ implies $\kappa'(G)\ge 6$, but $\lambda_2(G)<d-3$ does not imply $\kappa'(G)\ge 5$. In Section 2,
for any positive integer $d \ge 3$ and for even positive integer $t$, we construct a $d$-regular multigraph
$H_{d,t}$ with $\kappa'(H_{d,t})=t$ and $\lambda_2(H_{d,t})=d-t$.  Thus Theorem~\ref{main2} is best possible for all $t \ge 2$ 
regardless of whether $t$ is odd or even.

For undefined terms, see West~\cite{W} or Godsil and Royle~\cite{GR}.

\section{The Construction}
Cioab{\v a}~\cite{C2} presented examples for sharpness in the upper bound on $\lambda_2(G)$
for a $d$-regular simple graph $G$ to guarantee that $G$ is either 2-edge-connected or 3-edge-connected.
In this section, we present a smallest $d$-regular multigraph $H_{d,1}$ with $\kappa'(H_{d,1})=1$ and with $\lambda_2(H_{d,1})=\frac{d-1+\sqrt{9d^2-10d+17}}4$, and a smallest $d$-regular multigraph $H_{d,t}$ with $\kappa'(H_{d,t})=t$
and with $\lambda_2(H_{d,t})=d-t$ for every even positive integer $t \ge 2$.

\begin{obs} \label{obs}
For a positive integer $d \ge 3$,
there exists only one multigraph with three vertices such that every vertex has degree $d$,
except only one vertex with degree $d-1$.
\end{obs}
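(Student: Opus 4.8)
The plan is to argue directly from the degree sequence. Label the three vertices $u, v, w$, and suppose $\deg(u)=\deg(v)=d$ while $\deg(w)=d-1$. Since there are no loops, every edge joins two distinct vertices among $\{u,v,w\}$, so the only parameters to determine are $a=a(u,v)$, $b=a(u,w)$, $c=a(v,w)$, the edge multiplicities. The degree conditions give the linear system $a+b=d$, $a+c=d$, $b+c=d-1$. First I would solve this: subtracting the first two equations gives $b=c$, and then $2b=d-1$, so $b=c=(d-1)/2$ and $a=d-b=(d+1)/2$. This already shows the multigraph, if it exists, is unique.

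Next I would check realizability. The multiplicities $a,b,c$ must be nonnegative integers, which forces $d$ to be odd; since the Observation is stated for all $d\ge 3$, I would note that the intended reading is "for odd $d\ge 3$" (or simply record that the construction in the section applies when $d$ is odd, as is consistent with the parity remarks about $H_{d,1}$ and $H_{d,t}$). With $d$ odd, $a=(d+1)/2\ge 2$ and $b=c=(d-1)/2\ge 1$, all strictly positive, so the resulting multigraph is connected and genuinely uses all three pairs. Finally I would observe uniqueness up to isomorphism: any relabeling of $\{u,v,w\}$ that preserves the degree sequence must fix $w$ and may swap $u,v$, and since $b=c$ this swap is an automorphism; hence the solution $(a,b,c)$ determines the multigraph uniquely.

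The main (and essentially only) obstacle here is the parity subtlety: the blunt statement "for a positive integer $d\ge 3$" cannot be literally correct for even $d$, since then $(d-1)/2$ is not an integer and no such multigraph exists. I would therefore phrase the proof so that it both exhibits the unique candidate $(a,b,c)=\bigl(\tfrac{d+1}{2},\tfrac{d-1}{2},\tfrac{d-1}{2}\bigr)$ and makes explicit that existence requires $d$ odd, which is exactly the case used later when constructing $H_{d,1}$.
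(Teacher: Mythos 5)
Your proof is correct and follows essentially the same route as the paper: set up the multiplicities $a,b,c$ between the three vertex pairs, use the degree conditions to force $a=\frac{d+1}{2}$, $b=c=\frac{d-1}{2}$, and conclude uniqueness. Your added remark that realizability requires $d$ odd is a fair observation the paper leaves implicit (it surfaces only later, in the proof of Theorem~\ref{main1}, where $d$ is noted to be odd), but it does not change the argument.
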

\begin{proof}
Let $G$ be a multigraph with three vertices, say $v_1, v_2$, and $v_3$ such that $v_1$ and $v_2$ have degree $d$,
and $v_3$ has degree $d-1$. Let $a, b, $ and $c$ be the number of edges between $v_1$ and $v_2$,
between $v_2$ and $v_3$, and between $v_3$ and $v_1$, respectively.
Since $v_1$ and $v_2$ have degree $d$, and $v_3$ has degree $d-1$,
we have $a+c=a+b=d$, $c+b=d-1$, and $2(a+b+c)=3d-1$. Thus, we have $a=\frac{d+1}2$ and $b=c=\frac{d-1}2$,
which gives the desired result.
\end{proof}

\begin{cons}{\rm Let $B_{d,1}$ be the 3-vertex multigraph guaranteed by Observation~\ref{obs}.
Let $H_{d,1}$ be the graph obtained from two copies of $B_{d,1}$ by adding one edge between
the two vertices with degree $d-1$ in the copies. 
Note that $H_{d,1}$ is the smallest $d$-regular multigraph with $\kappa'(H_{d,1})=1$.

Let $t$ be an even positive integer less than $d-1$, and  
let $B_{d,t}$ be the multigraph with two vertices, say $x$ and $y$, such that every vertex has degree $d-\frac t2$.
Let $H_{d,t}$ be the graph obtained from two copies of $B_{d,t}$ by adding $\frac t2$ edges between
two $x$ and between two $y$ in the copies. Note that $H_{d,t}$ is a smallest $d$-regular multigraph with $\kappa'(H_{d,t})=t$.
}
\end{cons}

When $d=3$, see Figure 1 and 2 for $t=1$ and for $t=2$, respectively.

Now, we determine the second largest eigenvalue of $H_{d,t}$.
First, we introduce some definitions.
Consider a partition $V(G) = V_1 \cup \cdots V_s$ of the vertex set of $G$
into $s$ non-empty subsets. For $1 \le i,j \le s$, 
let $b_{i,j}$ denote the average number of neighbours in $V_j$ of the vertices in $V_i$. 
The quotient matrix of this partition is the $s\times s$ matrix 
whose $(i, j)$-th entry equals $b_{i,j}$. 

\begin{thm} {\rm (see Corollary 2.5.4 in \cite{BH}, Lemma 9.6.1 in \cite{GR})}\label{GR1}
The eigenvalues of the quotient matrix interlace the eigenvalues of $G$. 
\end{thm}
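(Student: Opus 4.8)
The statement is the classical eigenvalue-interlacing lemma for quotient matrices (the Poincar\'e separation theorem in disguise), and the plan is to derive it from the Courant--Fischer min--max characterization of eigenvalues together with a symmetrization of the (in general non-symmetric) quotient matrix. Write $A=A(G)$, which is a real symmetric $n\times n$ matrix since $G$ has no loops; let $B=(b_{i,j})$ be the $s\times s$ quotient matrix of the partition $V(G)=V_1\cup\cdots\cup V_s$, and set $n_i=|V_i|$. Let $S$ be the $n\times s$ characteristic matrix whose $j$-th column is the indicator vector $\mathbf{1}_{V_j}$, and let $D=\mathrm{diag}(n_1,\dots,n_s)$. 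Then $S^{T}S=D$, and since $b_{i,j}=\frac{1}{n_i}\,\mathbf{1}_{V_i}^{T}A\,\mathbf{1}_{V_j}$ (the average, over $v\in V_i$, of the number of edges from $v$ into $V_j$), one has $B=D^{-1}S^{T}AS$.

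First I would symmetrize $B$. Put $\widehat S=SD^{-1/2}$; then $\widehat S^{T}\widehat S=D^{-1/2}(S^{T}S)D^{-1/2}=I_s$, so the columns of $\widehat S$ form an orthonormal set in $\mathbb{R}^{n}$, and $\widetilde B:=\widehat S^{T}A\widehat S=D^{1/2}BD^{-1/2}$ is real symmetric and similar to $B$. In particular $B$ and $\widetilde B$ have the same eigenvalues, all of which are real. Geometrically, $\widetilde B$ is the compression of $A$ to the $s$-dimensional subspace $\mathrm{col}(\widehat S)$, namely the space of vectors that are constant on each part of the partition.

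Next I would apply Courant--Fischer. For $x\in\mathbb{R}^{s}$ one has $x^{T}\widetilde Bx=(\widehat Sx)^{T}A(\widehat Sx)$ and $\|\widehat Sx\|^{2}=x^{T}\widehat S^{T}\widehat Sx=\|x\|^{2}$, so every Rayleigh quotient of $\widetilde B$ is a Rayleigh quotient of $A$ taken along the image of an equal-dimensional subspace (here I use that $\widehat S$ is injective, hence carries $k$-dimensional subspaces to $k$-dimensional subspaces). Running the ``max of min'' formula for the $i$-th largest eigenvalue, and separately the ``min of max'' formula, yields $\lambda_i(G)\ge\theta_i\ge\lambda_{n-s+i}(G)$ for $1\le i\le s$, where $\theta_1\ge\cdots\ge\theta_s$ are the eigenvalues of $\widetilde B$ and hence of $B$. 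This is exactly the asserted interlacing.

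There is no real obstacle beyond the bookkeeping: the one step carrying any content is the similarity $B\mapsto D^{1/2}BD^{-1/2}$, which replaces the non-symmetric quotient matrix by a genuine principal compression of a symmetric matrix so that the textbook min--max argument applies; everything else is routine. Accordingly, since this is Corollary 2.5.4 of \cite{BH} and Lemma 9.6.1 of \cite{GR}, within the paper it suffices to quote it, and the sketch above is the argument underlying those references.
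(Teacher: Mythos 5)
Your proof is correct: the paper itself gives no argument for this theorem, quoting it from Brouwer--Haemers (Corollary 2.5.4) and Godsil--Royle (Lemma 9.6.1), and your symmetrization $\widetilde B=D^{1/2}BD^{-1/2}=\widehat S^{T}A\widehat S$ followed by Courant--Fischer is exactly the standard argument underlying those references. Nothing further is needed.
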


This partition is {\it equitable} if for each $1\le i,j \le s$, 
any vertex $v \in V_i$ has exactly $b_{i,j}$ neighbours in $V_j$ . 

\begin{thm} {\rm (see Lemma 2.3.1 in \cite{BH}, Theorem 9.3.3 in \cite{GR})}\label{GR2}
The set of the eigenvalues of $G$ includes the eigenvalues of the quotient matrix for an equitable partition of $G$.
\end{thm}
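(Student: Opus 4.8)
The plan is to exhibit an explicit injective linear map intertwining the quotient matrix with $A(G)$, so that eigenvectors of the former push forward to eigenvectors of the latter. Write $n=|V(G)|$, let $B=(b_{i,j})$ be the quotient matrix of the equitable partition $V(G)=V_1\cup\cdots\cup V_s$, and define the $n\times s$ \emph{characteristic matrix} $P$ by setting its $(v,i)$ entry to $1$ if $v\in V_i$ and to $0$ otherwise, so that the $i$-th column of $P$ is the indicator vector of $V_i$. Since the $V_i$ are nonempty and pairwise disjoint, the columns of $P$ have pairwise disjoint nonzero supports; hence $P$ has rank $s$, and in particular $Px\ne 0$ whenever $x\ne 0$.

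First I would verify the identity $A(G)\,P=P\,B$. For a vertex $v\in V_i$ and an index $j$, the $(v,j)$ entry of $A(G)P$ equals $\sum_{w\in V_j}a_{v,w}$, which is the number of edges of $G$ from $v$ into $V_j$ counted with multiplicity, while the $(v,j)$ entry of $PB$ equals $b_{i,j}$; equitability says exactly that these two quantities agree. This argument is insensitive to multiple edges, since $a_{v,w}$ already counts them.

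Then, given an eigenvalue $\theta$ of $B$ with $Bx=\theta x$ and $x\ne 0$, I would compute $A(G)(Px)=(A(G)P)x=(PB)x=\theta(Px)$ and conclude from $Px\ne 0$ that $\theta$ is an eigenvalue of $A(G)$ with eigenvector $Px$. This yields the claimed containment of the eigenvalue set of the quotient matrix in the eigenvalue set of $G$.

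There is no serious obstacle here; the only points needing care are that the intertwining identity $A(G)P=PB$ is precisely the combinatorial content of equitability (and survives the passage to multigraphs because $a_{v,w}$ records multiplicities), and that $P$ is injective so that a nonzero eigenvector of $B$ is not annihilated. Multiplicities of eigenvalues require no attention, since only set containment is asserted.
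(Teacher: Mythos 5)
Your argument is correct and is exactly the standard proof of this fact: the paper does not prove the statement itself but cites it from Brouwer--Haemers (Lemma 2.3.1) and Godsil--Royle (Theorem 9.3.3), and those sources use precisely your characteristic-matrix identity $A(G)P=PB$ together with injectivity of $P$. Nothing further is needed; your remark that the identity survives for multigraphs because $a_{v,w}$ records edge multiplicities is the only adaptation relevant to this paper's setting, and you have handled it.
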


\begin{figure}
\begin{center}
\includegraphics[height=2cm]{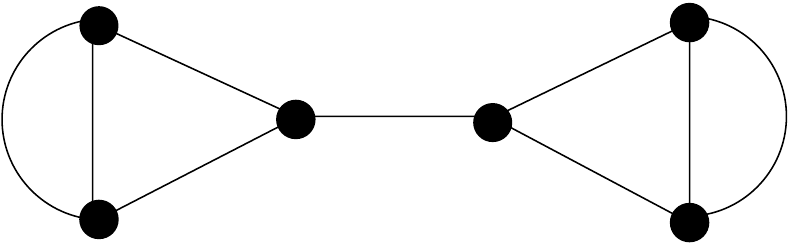}
\end{center}
\caption{A cubic multigraph with $\lambda_2= \frac{1+\sqrt{17}}2$ and $\kappa'=1$}
\end{figure}

\begin{figure}
\begin{center}
\includegraphics[height=2cm]{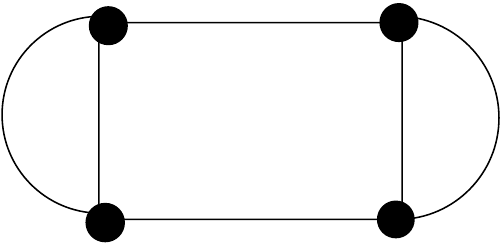}
\end{center}
\caption{A cubic multigraph with $\lambda_2= 1$ and $\kappa'=2$}
\end{figure}

\begin{thm}{\rm (see Proposition 3.4.1 in \cite{BH}, Theorem 8.8.2 in \cite{GR})}\label{L}
A graph is bipartite if and only if its spectrum is symmetric about the origin.
\end{thm}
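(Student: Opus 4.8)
The statement is classical and the plan is to prove the two implications separately. For the ``only if'' direction, suppose $G$ is bipartite with vertex classes $X$ and $Y$. Listing the vertices of $X$ before those of $Y$, the adjacency matrix takes the block form $A(G)=\left(\begin{smallmatrix}0 & B\\ B^{\mathsf T} & 0\end{smallmatrix}\right)$, since no edge has both endpoints in $X$ or both in $Y$; if $G$ is disconnected one two-colours each component, so this form still applies. If $\lambda$ is an eigenvalue of $G$ with eigenvector $w=\binom{u}{v}$ partitioned conformally with the blocks, then $Bv=\lambda u$ and $B^{\mathsf T}u=\lambda v$, whence a direct computation gives $A(G)\binom{u}{-v}=-\lambda\binom{u}{-v}$. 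Since $w\mapsto\binom{u}{-v}$ is a linear isomorphism from the $\lambda$-eigenspace onto the $(-\lambda)$-eigenspace, $\lambda$ and $-\lambda$ have equal multiplicity, so the spectrum is symmetric about the origin.

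For the ``if'' direction I would argue through closed walks. For each positive integer $k$, the number of closed walks of length $k$ in $G$ equals $\mathrm{tr}\,A(G)^k=\sum_{i=1}^n\lambda_i(G)^k$. If the spectrum is symmetric about the origin, then for odd $k$ the terms $\lambda^k$ and $(-\lambda)^k$ cancel in pairs and any zero eigenvalue contributes nothing, so $\mathrm{tr}\,A(G)^k=0$. Hence $G$ has no closed walk of odd length, hence no odd cycle, and a multigraph with no odd cycle is bipartite.

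The only part not handled by pure linear algebra, and thus the main obstacle in a weak sense, is the final combinatorial step. A shortest closed walk of odd length must be a cycle: if it visited some vertex twice other than at its coinciding start and end, it would decompose into two shorter closed walks whose lengths sum to an odd number, so one of them would be an odd closed walk, contradicting minimality; the resulting odd cycle is then forbidden, and its absence lets one properly $2$-colour each component by the parity of distance to a fixed root. Everything else is routine bookkeeping with block matrices and traces, so no real difficulty is expected; the closed-walk approach has the further merit of covering disconnected graphs and multigraphs uniformly, unlike the alternative via Perron--Frobenius --- a connected graph $H$ is bipartite if and only if $-\lambda_1(H)$ is an eigenvalue of $H$ --- which would require an additional reduction to the connected case.
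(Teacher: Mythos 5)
Your proof is correct, and since the paper gives no proof of this theorem (it is quoted from Brouwer--Haemers and Godsil--Royle), the right comparison is with the standard textbook argument, which yours matches: the block form $\bigl(\begin{smallmatrix}0 & B\\ B^{\mathsf T} & 0\end{smallmatrix}\bigr)$ with the sign-flipped eigenvector for the forward direction, and $\mathrm{tr}\,A^{k}=\sum_i\lambda_i^{k}=0$ for odd $k$ forcing the absence of odd closed walks, hence of odd cycles, for the converse. Your handling of multigraphs and disconnected graphs via the closed-walk count is exactly what is needed in this paper's setting, so nothing further is required.
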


\begin{thm}
The second largest eigenvalues of $H_{d,1}$ and $H_{d,t}$ are $\frac{d-1+\sqrt{9d^2-10d+17}}4$ and $d-t$, respectively.
\end{thm}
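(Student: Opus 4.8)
The plan is to compute the relevant eigenvalues directly by exhibiting an equitable partition of each graph and then applying Theorem~\ref{GR2} together with an interlacing argument to confirm that the eigenvalue coming out of the quotient matrix really is the \emph{second} largest. For $H_{d,1}$, the natural partition has four cells: the two degree-$(d-1)$ vertices joined by the bridge (two cells, one per vertex, or one cell of size two by symmetry) and the four remaining vertices (the two pairs of degree-$d$ vertices inside each copy of $B_{d,1}$). Using the symmetry that swaps the two copies of $B_{d,1}$, I would first reduce to a $2\times 2$ quotient matrix recording the structure within one copy: one cell is the single degree-$(d-1)$ vertex $u$, the other is the pair $\{w_1,w_2\}$ of degree-$d$ vertices, with edge multiplicities $a=\frac{d+1}2$ between $w_1$ and $w_2$ and $b=c=\frac{d-1}2$ from each $w_i$ to $u$. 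The quotient matrix of the symmetric (antisymmetric under the swap) part will be something like $\begin{pmatrix} -1 & d-1 \\ \frac{d-1}2 & \frac{d+1}2 \end{pmatrix}$ up to sign bookkeeping, whose characteristic polynomial is a quadratic in $x$; solving it should yield $x=\frac{d-1\pm\sqrt{9d^2-10d+17}}4$, the larger root being the claimed value.

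For $H_{d,t}$ the computation is cleaner. Here each copy of $B_{d,t}$ is just the pair $\{x,y\}$ with $d-\frac t2$ parallel edges between them, and the two copies are joined by $\frac t2$ edges between the two $x$'s and $\frac t2$ between the two $y$'s. The partition into the four singletons is equitable, and by the two obvious symmetries (swap the copies; swap $x\leftrightarrow y$ simultaneously in both copies) the eigenvectors decompose into $\pm1$ combinations. The all-ones vector gives $\lambda_1=d$; the vector that is $+1$ on one copy and $-1$ on the other, constant within each copy, should give eigenvalue $(d-\tfrac t2)-\tfrac t2 = d-t$; and the remaining two sign patterns give eigenvalues of smaller absolute value. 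So the full spectrum is explicitly $\{d,\ d-t,\ \ast,\ \ast\}$ and reading off $\lambda_2 = d-t$ is immediate once one checks the two remaining eigenvalues do not exceed $d-t$.

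The main obstacle is not the eigenvalue arithmetic but the verification that the value produced by the quotient matrix is genuinely $\lambda_2$ rather than some lower eigenvalue — in other words, ruling out that there is another eigenvalue strictly between it and $\lambda_1=d$. For $H_{d,t}$ this is handled completely by the equitable partition since we get \emph{all six} eigenvalues (four from the quotient, and the $-d$-type reasoning or direct count for the multiplicities of the vertices collapsed within cells — here cells are singletons so in fact the $4\times4$ quotient already captures everything on the symmetric part, and the orthogonal complement contributes eigenvalues that I would pin down using the local multigraph structure or Theorem~\ref{L}-style symmetry). For $H_{d,1}$, the quotient matrix from the equitable partition has size at most $4$, so it furnishes at most $4$ of the $6$ eigenvalues; the other $2$ come from vectors that are supported within the cell $\{w_1,w_2\}$ (and its mirror), and these are readily computed — the difference $e_{w_1}-e_{w_2}$ is an eigenvector with eigenvalue $-a=-\frac{d+1}2$. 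Collecting: the six eigenvalues of $H_{d,1}$ are the roots of the two quadratics coming from the symmetric/antisymmetric $2\times2$ quotients, together with $-\frac{d+1}2$ (twice), and a short comparison shows $\frac{d-1+\sqrt{9d^2-10d+17}}4$ sits strictly between $d$ and all the others. That comparison — showing, say, that this root is less than $d$ but larger than the companion root of the other quadratic and larger than $-\frac{d+1}2$ — is the one genuinely fiddly inequality, and I would dispatch it by noting $\sqrt{9d^2-10d+17}<3d-1$ for $d\ge 3$ (equivalently $4d>16$, i.e.\ checking $d\ge 3$ separately or rewriting), which gives the root $<\frac{d-1+3d-1}4=d-\tfrac12<d$.
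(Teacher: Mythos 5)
Your computation is correct and reaches the same spectra as the paper, but by a different route. For $H_{d,t}$ the paper uses only the two-cell equitable partition (the two copies of $V(B_{d,t})$), which by Theorem~\ref{GR2} yields $d$ and $d-t$, and then invokes bipartiteness via Theorem~\ref{L} to conclude the remaining eigenvalues are $-d$ and $-(d-t)$; you instead exhibit all four sign-pattern eigenvectors directly, which is just as valid and avoids Theorem~\ref{L} altogether. For $H_{d,1}$ the paper simply writes down the full $6\times 6$ adjacency matrix and factors its characteristic polynomial as $(x-d)\bigl\{x^2-\frac{d-1}{2}x-\frac{d^2-d+2}{2}\bigr\}(x+\frac{d-3}{2})(x+\frac{d+1}{2})^2$; your symmetry-adapted decomposition (copy-swap plus the $w_1\leftrightarrow w_2$ swap) produces exactly these factors in smaller pieces: your antisymmetric $2\times2$ block $\begin{pmatrix} -1 & d-1\\ \frac{d-1}{2} & \frac{d+1}{2}\end{pmatrix}$ has characteristic polynomial $x^2-\frac{d-1}{2}x-\frac{d^2-d+2}{2}$, whose larger root is indeed $\frac{d-1+\sqrt{9d^2-10d+17}}{4}$, the symmetric block supplies $d$ and $-\frac{d-3}{2}$, and $e_{w_1}-e_{w_2}$ and its mirror supply $-\frac{d+1}{2}$ twice. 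Your approach is more structural and scales better; the paper's is a shorter brute-force verification.

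Three small repairs are needed. First, the antisymmetric block is not a quotient matrix of an equitable partition in the sense of Theorem~\ref{GR2} (its entries include $-1$), so you should justify it as the matrix of $A$ restricted to the $A$-invariant subspace of copy-antisymmetric, $w$-symmetric vectors (or just exhibit the eigenvectors), rather than appeal to Theorem~\ref{GR2}/interlacing. Second, your final inequality $\sqrt{9d^2-10d+17}<3d-1$ is false for $d=3,4$ (it is equivalent to $d>4$); what you actually need is $\sqrt{9d^2-10d+17}<3d+1$, which holds for all $d\ge 1$ and gives the root $<\frac{(d-1)+(3d+1)}{4}=d$, after which $\lambda_2$ is identified since all other eigenvalues ($-\frac{d-3}{2}$, $-\frac{d+1}{2}$, and the companion root) are nonpositive for $d\ge3$. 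Third, for $H_{d,t}$ the remaining eigenvalues are $-(d-t)$ and $-d$, which do not have ``smaller absolute value'' than $d-t$; the correct statement is simply that they are negative, hence below $d-t$, so $\lambda_2(H_{d,t})=d-t$.
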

\begin{proof}
Partition the vertex set of $H_{d,t}$ into two parts: two copies of $V(B_{d,t})$ in $H_{d,t}$.
Note that this partition is equitable, and its quotient matrix is
$$A=\begin{pmatrix}
    d-\frac t2 & \frac t2 \\
    \frac t2  & d - \frac t2
\end{pmatrix}.$$
The characteristic polynomial of the matrix is $(x-d)(x-d+t)$. By Theorem~\ref{GR2}, the numbers $d$ and $d-t$ are eigenvalues of $H_{d,t}$, and by Theorem~\ref{L}, the eigenvalues of $H_{d,t}$ are $-d, -(d-t)$, $d-t$, and $d$.
Thus $\lambda_2(H_{d,t})=d-t$.

The adjacency matrix of $H_{d,1}$ is 

$$\begin{pmatrix}
    0 & \frac{d+1}{2} & \frac{d-1}{2} & 0 & 0 & 0 \\
    \frac{d+1}2 & 0 & \frac{d-1}{2} &  0 & 0 & 0  \\
    \frac{d-1}2 & \frac{d-1}2 & 0 & 1 & 0 & 0  \\
 0 & 0 & 1 & 0 &  \frac{d-1}2 & \frac{d-1}2\\
0 & 0 & 0 & \frac{d-1}2 & 0 & \frac{d+1}{2}\\
 0 & 0 & 0 & \frac{d-1}{2}  & \frac{d+1}{2} & 0 \\
   
\end{pmatrix}.$$
The characteristic polynomial of this matrix is  $$(x-d)\left\{x^2-(\frac{d-1}2)x-(\frac{d^2-d+2}2)\right\}(x+\frac{d-3}2)(x+\frac{d+1}2)^2,$$
so the eigenvalues of $H_{d,1}$ are $d, \frac{d-1\pm \sqrt{9d^2-10d+17}}4, -\frac{d+1}2, -\frac{d+1}2$, and  $-\frac{d-3}2$. Thus, we have $\lambda_2(H_{d,1})=\frac{d-1+\sqrt{9d^2-10d+17}}4$.\\
\end{proof}

\section{Proof of Theorem~\ref{main1}}

In this section, we prove Theorem~\ref{main1} by finding appropriate partitions of a $d$-regular multigraph $G$ with $\kappa'(G)=1$
such that the quotient matrices of these partitions have their second largest eigenvalues greater than equal to $\frac{d-1+\sqrt{9d^2-10d+17}}2$.
By Theorem~\ref{GR1}, we have $\lambda_2(G) \ge \frac{d-1+\sqrt{9d^2-10d+17}}2$ . Now, we prove Theorem~\ref{main1}.

\medskip
\noindent
{\it Proof of Theorem~\ref{main1}}.
Assume to the contrary that $\kappa'(G) = 1$. Then there exists a vertex subset $S$ such that $|[S,\overline{S}]|=1$.
Let $a=|S|$ and let $b=|\overline{S}|$, so we have $a+b=n$. We may assume that $a \le b$.
Note that  $d$, $a$, and $b$ must be odd by degree-sum formula, and $a$ is at least 3. The quotient matrix of the partition $S$ and $\overline{S}$ is 

$$A=\begin{pmatrix}
    d-\frac{1}{a} & \frac{1}{a} \\
    \frac{1}{b}  & d - \frac{1}{b}
\end{pmatrix}.$$

The characteristic polynomial of this matrix equals 
$$\left(x-d+\frac{1}{a}\right)\left(x-d+\frac{1}{b}\right)-\frac{1}{ab}$$,
$$=(x-d)^2+\left(\frac{1}{a}+\frac{1}{b}\right)(x-d)=(x-d)\left(x-d+\frac{1}{a}+\frac{1}{b}\right).$$
Thus the eigenvalues of the matrix $A$ are $d$ and $d-\frac{1}{a}-\frac{1}{b}$.

\medskip
\noindent
{\it Case 1. $a \ge 5$.}
By Theorem~\ref{GR1}, we have

\begin{equation}\label{keyequ} \lambda_2(G) \ge
d- \frac{1}{a}-\frac{1}{b} \ge d - \frac{1}{5} - \frac 15 = d- \frac {2}{5}. \end{equation}
Since $(3d-\frac{3}{5})^2 -(9d^2-10d+17) > \frac{32}5 d - 17 > 0$ for $d \ge 3$,
we have $$d - \frac{2}{5} -  \frac{d-1+\sqrt{9d^2-10d+17}}4 = \frac {3d- \frac{3}{5} - \sqrt{9d^2-10d+17}}4 > 0.$$

\medskip
\noindent
{\it Case 2. $a=3$ and $b \ge 11$.}
By Theorem~\ref{GR1}, we have

\begin{equation}\label{keyequ} \lambda_2(G) \ge
d- \frac{1}{a}-\frac{1}{b} \ge d - \frac{1}{3} - \frac 1{11} = d- \frac {14}{33}. \end{equation}
Since $(3d-\frac{23}{33})^2 -(9d^2-10d+17) > \frac{64}{11} d - 17 > 0$ for $d \ge 3$,
we have $$d - \frac{14}{33} -  \frac{d-1+\sqrt{9d^2-10d+17}}4 = \frac {3d- \frac{23}{33} - \sqrt{9d^2-10d+17}}4 > 0.$$

\medskip
\noindent
{\it Case 3. $a=3$ and $5 \le b \le 9$}. Since $a=3$ and there exists only one edge between $S$ and $\overline{S}$, we have $G[S]=B_{d,1}$
by Observation~\ref{obs}. Thus there exists only one vertex, say $x$,  in $\overline{S}$ with a neighbor in $S$.
Partition the vertex set of $G$ into three parts: $V(B_{d,1})$, $\{x\}$, and $V(G)-V(B_{d,1})-\{x\}$.
The quotient matrix of this partition is
$$A=\begin{pmatrix}
    d-\frac{1}{3} & \frac{1}{3} & 0 \\
    1  &  0  & d-1 \\
    0 & c & d-c
\end{pmatrix}.$$
where $c=\frac{d-1}{b-1}$. The characteristic polynomial of the matrix is
$$(x-d+\frac 13)\left\{x(x-d+c)+c(1-d)\right\} -\frac 13(x-d+c)$$
$$=(x-d+ \frac 13)\{x^2-(d-c)x+c-cd\}-\frac 13(x-d+c)$$
$$=(x-d)\{x^2+(c-d)x+c-cd\} + \frac 13 \{x^2+(c-d)x+c-cd-x+d-c\}$$
$$=(x-d)\{x^2+(c-d)x+c-cd\}+\frac 13\{x^2+(c-d-1)x+(1-c)d\}$$
$$=(x-d)\{x^2+(c-d)x+c-cd\}+\frac 13(x-d)(x+c-1)$$
$$=(x-d)\{x^2+(c-d)x+c-cd + \frac 13(x+c-1)\}$$
$$=(x-d)\{x^2+(c-d+\frac 13)x + \frac 43 c -cd -\frac 13\}$$

The second largest root of the polynomial is
$$\frac{d - c - \frac 13 +\sqrt{(d-c-\frac 13)^2-\frac{16}3c+4cd+\frac 43}}2.$$

Now, it suffices to show that for $b \in \{5, 7, 9\}$, we have $$\frac{d - c - \frac 13 +\sqrt{(d-c-\frac 13)^2-\frac{16}3c+4cd+\frac 43}}2 ~~> ~~\frac{d-1+\sqrt{9d^2-10d+17}}4.$$

{\it Subcase 3-1. $b= 9$.}  By replacing $c$ with $\frac{d-1}8$, we have 
$$\frac{d - \frac{d-1}8 - \frac 13 +\sqrt{(d-\frac{d-1}8-\frac 13)^2-\frac{16}3\frac{d-1}8+4\frac{d-1}8d+\frac 43}}2
=\frac{21d-5+\sqrt{729d^2-882d+1177}}{48}.$$
By subtracting $\frac{d-1+\sqrt{9d^2-10d+17}}4$ from $\frac{21d-5+\sqrt{729d^2-882d+1177}}{48}$, we have
$$\frac{9d+7+\sqrt{729d^2-882d+1177}-12\sqrt{9d^2-10d+17}}{48}.$$
Since both $9d+7+\sqrt{729d^2-882d+1177}$ and $12\sqrt{9d^2-10d+17}$ are positive,
it suffices to show that $0 < \left(9d+7+\sqrt{729d^2-882d+1177}\right)^2 - \left(12\sqrt{9d^2-10d+17}\right)^2$
$$=2(9d+7)\sqrt{729d^2-882d+1177} - (486d^2 -684d+790).$$ 
Since both $2(9d+7)\sqrt{729d^2-882d+1177}$ and $486d^2-684d+790$ are positive for $d \ge 1$,
it suffices to show that $0 < \left(2(9d+7)\sqrt{729d^2-882d+1177}\right)^2 - \left(486d^2-684d+790\right)^2$
$$=236196d^4+81648d^3+79704d^2+420336d+230692$$ $$- \left(236196d^4-664848d^3+1235736d^2-1080720d+624100\right)$$ 
$$=746496d^3-1156032d^2+1501056d-393408.$$
By comparing each coefficient of $d^i$ for $i \in \{0,1,2,3\}$, we have this inequality
 $$746496d^3-1156032d^2+1501056d-393408 > 746496d^3 - 1194393.6d^2 + 1492992d-447897.6$$
$$=746496(d^3-1.6d^2+2d-0.6)=746496\{d^2(d-1.6)+2(d-1.6) + 2.6\}$$ $$ >746496(d^2+2)(d-1.6)>0$$
for $d \ge 3$.

{\it Subcase 3-2. $b=7$}. By plugging 7 into $b$, we have 
$$\frac{d - \frac{d-1}6 - \frac 13 +\sqrt{(d-\frac{d-1}6-\frac 13)^2-\frac{16}3\frac{d-1}6+4\frac{d-1}6d+\frac 43}}2=\frac{5d-1+\sqrt{49d^2-66d+81}}{12}.$$
By subtracting $\frac{d-1+\sqrt{9d^2-10d+17}}4$ from $\frac{5d-1+\sqrt{49d^2-66d+81}}{12}$, we have
$$\frac{2d+2+\sqrt{49d^2-66d+81}-3\sqrt{9d^2-10d+17}}{12}.$$
Since both $2d+2+\sqrt{49d^2-66d+81}$ and $3\sqrt{9d^2-10d+17}$ are positive,
it suffices to show that $0 < \left(2d+2+\sqrt{49d^2-66d+81}\right)^2 - \left(3\sqrt{9d^2-10d+17}\right)^2$
$$=4(d+1)\sqrt{49d^2-66d+81} - (28d^2 -32d+68).$$ 
Since both $4(d+1)\sqrt{49d^2-66d+81}$ and $28d^2-32d+68$ are positive for $d \ge 1$,
it suffices to show that $0 < \left(4(d+1)\sqrt{49d^2-66d+81}\right)^2 - \left(28d^2-32d+68\right)^2$
$$=784d^4+512d^3-32d^2+1536d+1296- \left(784d^4-1792d^3+4832d^2-4352d+4624\right)$$ 
$$=2304d^3-4864d^2+5888d-3328.$$
By comparing each coefficient of $d^i$ for $i \in \{0,1,2,3\}$, we have this inequality
 $$2304d^3-4864d^2+5888d-3328 > 2304d^3 - 5068.8d^2 + 4608d-10137.6$$
$$=2304(d^3-2.2d^2+2d-4.4)=2304\{d^2(d-2.2)+2(d-2.2)\}$$ $$ >746496(d^2+2)(d-2.2)>0$$
for $d \ge 3$.

{\it Subcase 3-3. $b=5$}. If we plug 5 into $b$, then we have 
$$\frac{d - \frac{d-1}4 - \frac 13 +\sqrt{(d-\frac{d-1}4-\frac 13)^2-\frac{16}3\frac{d-1}4+4\frac{d-1}4d+\frac 43}}2=\frac{9d-1+\sqrt{225d^2-354d+385}}{24}.$$
By subtracting $\frac{d-1+\sqrt{9d^2-10d+17}}4$ from $\frac{9d-1+\sqrt{225d^2-354d+385}}{24}$, we have
$$\frac{3d+5+\sqrt{225d^2-354d+385}-6\sqrt{9d^2-10d+17}}{24}.$$
Since both $3d+5+\sqrt{225d^2-354d+385}$ and $6\sqrt{9d^2-10d+17}$ are positive,
it suffices to show that $0 < \left(3d+5+\sqrt{225d^2-354d+385}\right)^2 - \left(6\sqrt{9d^2-10d+17}\right)^2$
$$=2(3d+5)\sqrt{225d^2-354d+385} - (90d^2 -36d+202).$$ 
Since both $2(3d+5)\sqrt{225d^2-354d+385}$ and $90d^2-36d+202$ are positive for $d \ge 1$,
it suffices to show that $0 < \left(2(3d+5)\sqrt{225d^2-354d+385}\right)^2 - \left(90d^2-36d+202\right)^2$
$$=8100d^4+14256d^3-6120d^2+10800d+38500- \left(8100d^4-6480d^3+37656d^2-14544d+40804\right)$$ 
$$=20736d^3-43776d^2+25344d-2304.$$
By comparing each coefficient of $d^i$ for $i \in \{0,1,2,3\}$, we have this inequality
 $$20736d^3-43776d^2+25344d-2304 > 20736d^3 - 47692.8d^2 + 20736d-20733.7$$
$$=20736(d^3-2.3d^2 +d-2.3)=\{20736(d^2+1)(d-2.3)\}>0$$
for $d \ge 3$.

{\it Case 4. $a=3$ and $b=3$}. By Observation~\ref{obs}, the only graph with $a=3$ and $b=3$ is $H_{d,1}$,
which completes the proof.

\section{Proof of Theorem~\ref{main2}}

If the edge-connectivity of a regular multigraph $G$ is even,
then there exists a vertex subset $S$ such that $|[S,V(G)-S]|$ is even.
Since $G$ is multigraph, the size of $S$ may be 2, so we have much simpler case than the ones of Theorem~\ref{main1}.

\medskip
\noindent
{\it Proof of Theorem~\ref{main2}}.
Assume to the contrary that $\kappa'(G) \le t$. Then there exists a vertex subset $S$ such that $l=|[S,\overline{S}]|\le t$
for some positive integer $l$.
Let $a=|A|$ and let $b=|\overline{A}|$, so we have $a+b=n$. We may assume that $a \ge b$.
Note that $a$ is at least 2. The quotient matrix of the partiton $S$ and $\overline{S}$ is 

$$A=\begin{pmatrix}
    d-\frac{l}{a} & \frac{l}{a} \\
    \frac{l}{b}  & d - \frac{l}{b}
\end{pmatrix}.$$

The characteristic polynomial of this matrix equals 
$$\left(x-d+\frac{l}{a}\right)\left(x-d+\frac{l}{b}\right)-\frac{l^2}{ab}$$,
$$=(x-d)^2+\left(\frac{l}{a}+\frac{l}{b}\right)(x-d)=(x-d)\left(x-d+\frac{l}{a}+\frac{l}{b}\right).$$
Thus the eigenvalues of the matrix $A$ are $d$ and $d-\frac{l}{a}-\frac{l}{b}$.

Since $b\ge a \ge 2$, by Theorem~\ref{GR1}, we have

\begin{equation}\label{keyequ} \lambda_2(G) \ge
d- \frac{l}{a}-\frac{l}{b} \ge d - \frac{l}{2} - \frac l2 = d- l \ge d-t. \end{equation}

Assume that $t$ is odd. Then we have $t \ge 3$.
If $l=t$, then since $b \ge a \ge 3$, 

\begin{equation}
\lambda_2(G) \ge
d- \frac{l}{a}-\frac{l}{b} \ge d-\frac{2l}3 = d-\frac{2t}3 \ge d-t+1. \end{equation}

If $l < t$, then since $b \ge a \ge 2$,
\begin{equation}
\lambda_2(G) \ge d- \frac{l}{a}-\frac{l}{b} \ge d- l \ge d-t+1. \end{equation}


\begin{thebibliography}{99}


\bibitem {BH} A. Brouwer and W. Haemers, {\em Spectra of Graphs}, 245pp book (2011), available at {\tt http://www.win.tue.nl/~aeb/2WF02/spectra.pdf}.

\bibitem{C1}
S.L. Chandran, Minimum cuts, girth and spectral threshold, {\em Inform. Process. Lett.}
{\bf 89} (2004), no. 3, 105-110.

\bibitem{C2}
S.M. Cioab{\v a}, Eigenvalues and edge-connectivity of regular graphs. \emph{Linear Algebra Appl}, {\bf 432} (2010), 458--470.

\bibitem{F}
M. Fiedler, Algebraic connectivity of graphs. \emph{Czech.Math.J} {\bf 23} (1973),
298-305. 

\bibitem {GR} C. Godsil and G. Royle, {\em Algebraic Graph Theory}, Graduate Texts in Mathematics, 207. Springer-Verlag, New York, 2001.


\bibitem{KMNS}
S. Kirkland, J.J. Molitierno, M. Neumann and B.L. Shader, On graphs with equal
algebraic and vertex connectivity, \emph{Linear Algebra and its Applications}
{\bf 341} (2002), 45-56.

\bibitem{KS}
M. Krivelevich and B. Sudakov, Pseudo-random graphs, in {\emph More Sets,
Graphs and Numbers}, Bolyai Society Mathematial Studies {\bf 15} (2006), 199--262.


\bibitem{O}
S. O, Matchings, Connectivity, and Eigenvalues in Regular graphs, Ph.D Dissertation (University of Illinois, 2011).


\bibitem{W}
D.B. West, {\it Introduction to Graph Theory}, Prentice Hall, Inc., Upper Saddle River, NJ, 2001

\end{thebibliography}
\end{document}